\topskip \setlength{\parindent}{0pt} \setlength{\parskip}{3pt plus
\numberwithin{equation}{section}
\newtheorem{theorem}{Theorem}
\newtheorem{corollary}[theorem]{Corollary}
\newtheorem{lemma}[theorem]{Lemma}
\def\qq{{\mathbf q} }
\def\pp{{\mathbf p} }
\def\BB{\mathcal{B}}
\def\cora{\text{cor}_{\text{A}}}
\def\corb{\text{cor}_{\text{B}}}
\begin{document}

\begin{center}

{\large\bf  ENUMERATIONS OF BARGRAPHS WITH RESPECT TO CORNER STATISTICS
\rule{0mm}{6mm}\renewcommand{\thefootnote}{}
\footnotetext{\scriptsize 2010 Mathematics Subject Classification. 05A18.

\rule{2.4mm}{0mm}Keywords and Phrases. Bargraphs; Corners; Set partitions; Stirling numbers; Bell numbers.
}}

\vspace{1cc}
{\large\it Toufik Mansour and G\"{o}khan Y\i ld\i r\i m}

\vspace{1cc}
\parbox{24cc}{{\small

We study the enumeration of bargraphs with respect to some corner statistics. We find generating functions for the number of bargraphs that track the corner statistics of interest, the number of cells, and the number of columns. We also consider bargraph representation of set partitions and obtain some explicit formulas for the number of specific types of corners in such representations.

}}
\end{center}

\vspace{1cc}

\vspace{1.5cc}
\begin{center}
{\bf 1. Introduction}
\end{center}

Combinatorial analysis of certain geometric cluster models such as polygons, polycubes, polyominos is an important research endeavor for understanding many statistical physics models \cite{Fer, G, JR}. A finite connected union of unit squares on two dimensional integer lattice is called a \textit{polyomino}, and a \textit{bargraph} is a column-convex polyomino in the first quadrant of the lattice such that its lower boundary lies on the $x$-axis.
A \textit{bargraph} can also be considered as a self-avoiding path in the integer lattice
$\mathbb{L}=\mathbb{Z}_{\geq 0}\times \mathbb{Z}_{\geq 0} $ with steps $u=(0,1)$, $h=(1,0)$ and $d=(0,-1)$ that
starts at the origin, ends on the $x$-axis and never touches the
$x$-axis except at the endpoints. The steps $u,h$ and $d$ are
called \textit{up, horizontal} and \textit{down} steps
respectively. Enumerations of bargraphs with respect to some statistics have been an active area of research recently \cite{Fer,Man,PB}. Bosquet-Mel\'ou and Rechnitzer \cite{BMR} obtain the site-perimeter generating function for bargraphs, and also show that it is not D-finite. Blecher et al. investigated the generating functions for bargraphs with respect to some statistics such as the number of levels \cite{Bl1}, descents \cite{Bl2}, peaks \cite{Bl3}, and walls \cite{Bl4}. Deutsch and Elizalde \cite{DE} used a bijection between bargraphs and cornerless Motzkin paths, and determined more than twenty generating functions for bargraphs according to the number of up steps, the number of horizontal steps, and the statistics of interest such as the number of double rises and double falls, the length of the first descent, the least column height. Bargraphs are also used in statistical
physics to model vesicles or polymers \cite{SP, AP, PB}.

We shall study the enumerations of bargraphs and set partitions with respect to some corner statistics. We shall first introduce some definitions. A unit square in the lattice $\mathbb{L}$ is
called a \textit{cell}. We identify a bargraph with a
sequence of numbers $\pi=\pi_1\pi_2\cdots \pi_m$ where $m$ is the number
of horizontal steps of the bargraph and $\pi_j$ is the number of
\textit{cells} beneath the $j^{th}$ horizontal step which is also
called the \textit{height} of the $j^{th}$ \textit{column}. A vertex on a bargraph is called a \textit{corner} if it is at the
intersection of two different types of steps. A corner is called an \textit{$(a,b)$-corner} if it is formed by maximum number $a$ of one type of  consecutive steps followed by maximum number $b$ of another type of consecutive steps.
A corner is called of \textit{type A} if it is
formed by down steps followed by horizontal steps $(\llcorner)$. Similarly, a corner is of \textit{type B} if it is formed by horizontal steps followed by down steps $(\urcorner)$, see Figure~\ref{barFig}.  We use $\BB_n$ and $\BB_{n,k}$ to denote the set of all bargraphs with $n$ cells, and the set of all bargraphs with $n$ cells and $k$ columns respectively.

Bargraphs are also related to the set partitions. Recall that a
\text{partition} of set $[n]:=\{1,2,\cdots,n\}$ is any collection
of nonempty, pairwise disjoint subsets whose union is $[n]$. Each
subset in a partition is called a \textit{block} of the partition.
A partition $p$ of $[n]$ with $k$ blocks is said to be in the
\textit{standard form} if it is written as $p=A_1/A_2/\cdots/A_k$
where $\min(A_1)<\min(A_2)<\cdots<\min(A_k)$. There is also a
unique \textit{canonical sequential representation} of a partition
$p$ as a word of length $n$ over the alphabet $[k]$ denoted by $\pi=\pi_1\pi_2\cdots \pi_n$ where $\pi_i=j$ if
$i\in A_{\pi_j}$ which can be considered a bargraph
representation. For instance, the partition
$\pi=\{1,3,6\}/\{2,5\}/\{4,7\}/\{8\}$ has the canonical sequential
representation $\pi=12132134$. Mansour \cite{Man} studied the generating functions for the number of set partitions of $[n]$ represented as bargraphs according to the number of interior vertices. For some other enumeration results, see also \cite{Bl5, MShaSha}. Henceforth, we shall represent set
partitions as bargraphs corresponding to their canonical sequential representations.
\begin{figure}
\centering \setlength{\unitlength}{1.3mm}
\begin{picture}(30,20)
\linethickness{0.4mm} \put(0,0){\line(0,1){6.8}}
\put(0,6.8){\line(1,0){3.4}} \put(3.4,6.8){\line(0,1){6.8}}
\put(3.4,13.6){\line(1,0){10.2}} \put(13.6,
13.6){\line(0,-1){10.2}} \put(13.6,3.4){\line(1,0){6.8}}
\put(20.4,3.4){\line(0,1){6.8}} \put(20.4,10.2){\line(1,0){3.4}}
\put(23.8,10.2){\line(0,-1){3.4}} \put(23.8,6.8){\line(1,0){6.8}}
\put(30.6,6.8){\line(0,-1){6.8}} \put(13.8,13.9){$ a$}
\put(12.6,2.0){$b$} \put(23.9,10.5){$ c$} \put(22.8,4.8){$d$}
\linethickness{0.075mm} \multiput(0,0)(3.4,0){11} {\line(0,1){17}}
\multiput(0,0)(0,3.4){6} {\line(1,0){34}}
\end{picture}
\caption{The bargraph $\pi=244411322$.  Type A corners $b$ and $d$ are $(3,2)$ and $(1,2)$-corners respectively. Type B corners $a$ and $c$ are $(3,3)$ and $(1,1)$-corners respectively.}
\label{barFig}
\end{figure}
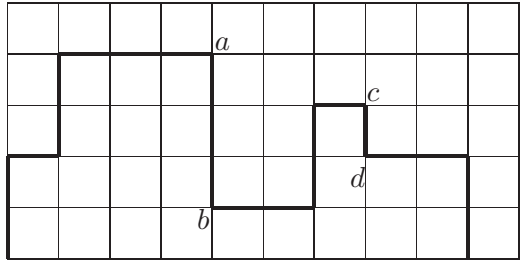

The rest of the paper is organized as follows. In section~2, we find the generating function for the
number of bargraphs according to the number of cells, the number
of columns, and the number of $(a,b)$-corners of type A for any given positive integers $a,b$. As a
corollary, we determine the total number of $(a,b)$-corners of type A, and the total number of type A corners over all bargraphs having $n$ cells. In section~2.3 and section~2.4, we extend these results to the restricted bargraphs in which the height of each column is restricted to be a maximum of $N$ for any given positive integer $N$, and to the set partitions respectively. We obtain
similar results for corners of type B in section~3.
One of the main results of the paper, Theorem~\ref{thmcAsp}, shows that the total number of corners of type A over the set partitions of $[n+1]$ with $k$ blocks is given by
$$\frac{n}{2}S_{n+1,k}-\frac{1}{4}S_{n+2,k}-\frac{n}{2}S_{n,k}+\frac{1}{4}S_{n+1,k}+\frac{1}{4}S_{n,k-2},$$
where $S_{n,k}$ is the Stirling number of second kind.
Similarly, Theorem~\ref{thmcBsp}, shows that the total number of corners of type B over the set partitions of $[n+1]$ with $k$ blocks is given by
$$\frac{n}{2}S_{n+1,k}-\frac{1}{4}S_{n+2,k}-\frac{n}{2}S_{n,k}+\frac{5}{4}S_{n+1,k}+\frac{1}{4}S_{n,k-2}.$$

\vspace{1.5cc}
\begin{center}
{\bf 2. Counting Corners of type A}
\end{center}

Let $H:=H(x,y,\qq)$ be the generating function for the number of bargraphs $\pi$ according to the number of cells
in $\pi$, the number of columns of $\pi$, and the number of
$(a,b)$-corners of type A in $\pi$ corresponding to the variables
$x,y$ and $\qq=(q_{a,b})_{a,b\geq 1}$ respectively. That is,
$$H=\sum_{n\geq 0}\sum_{\pi\in \BB_n}x^ny^{\text{col}(\pi)}\prod_{a,b\geq
1}q_{a,b}^{\Lambda_{(a,b)}(\pi)},$$
where $\Lambda_{(a,b)}(\pi)$ is the number of $(a,b)$-corners of
type A in $\pi$, and $\text{col}(\pi)$ denotes the number of columns of $\pi$.

From the definitions, we have
\begin{equation}\label{eqcA1}
H=1+\sum_{a\geq 1}H_a,
\end{equation}
where $1$ counts the empty bargraph, and $H_{a_1a_2\cdots a_s}:=H_{a_1a_2\cdots a_s}(x,y,\qq)$ is the generating function
for the number of bargraphs $\pi=a_1a_2\cdots a_s\pi'$ in which the height of the $j^{th}$ column is $a_j$, where $j=1,2,\ldots,s$. Since each bargraph
$\pi=a\pi'$ can be
decomposed as either $a$, $aj\pi''$ with $j\geq a$ or $ab\pi''$
with $1\leq b\leq a-1$, we have
\begin{equation}\label{eqcA2}
H_a=x^ay+x^ay\sum_{j\geq a}H_j+\sum_{b=1}^{a-1}H_{ab}.
\end{equation}
Note that each bargraph $\pi=ab\pi''$, $1\leq b\leq a-1$, can be
written as either $ab^m$ (where we define $b^m$ to be the word
$bb\cdots b$), $ab^mj\pi'$ with $j\geq b+1$, or $ab^mj\pi'$ with
$j\leq b-1$. Thus, for all $1\leq b\leq a-1$, we have
\begin{align*}
H_{ab}&=\sum_{m\geq 1}x^{a+bm}y^{m+1}q_{a-b,m}+\sum_{m\geq 1}\left(x^{a+bm}y^{m+1}q_{a-b,m}\sum_{j\geq b+1}H_{j}\right)\\
&+\sum_{m\geq1}\left(x^{a+b(m-1)}y^mq_{a-b,m}\sum_{c=1}^{b-1}H_{bc}\right),
\end{align*}
which is equivalent to
\begin{align}\label{eqcA3}
H_{ab}&=\sum_{m\geq 1}x^{a+bm}y^{m+1}q_{a-b,m}\\
&+\sum_{m\geq1}\left(x^{a+b(m-1)}y^{m}q_{a-b,m}\left(x^by\sum_{j\geq
b+1}H_{j}+\sum_{c=1}^{b-1}H_{bc}\right)\right).\notag
\end{align}
Thus, by \eqref{eqcA2}, we have that
$H_a-x^ay-x^ayH_a=x^ay\sum_{j\geq a+1}H_j +
\sum_{b=1}^{a-1}H_{ab}$, which, by \eqref{eqcA3}, leads to
\begin{equation}
H_{ab}=\alpha_{ab}(1-x^by)H_b\mbox{ with }\alpha_{ab}=\sum_{m\geq
1} x^{a+b(m-1)}y^{m}q_{a-b,m}.
\end{equation}
Therefore, by \eqref{eqcA1} and \eqref{eqcA2}, we can write
\begin{equation}\label{eqcA4}
H_a=x^ayH+\sum_{b=1}^{a-1}\beta_{ab}H_b
\end{equation}
with $\beta_{ab}=\alpha_{ab}(1-x^by)-x^ay$.
\begin{lemma}\label{lemcA1}
For all $a\geq1$,
\begin{align*}
H_a=H\left(x^ay+\sum_{j=1}^a\left(x^jy\sum_{s\geq0}L_a(j,s)\right)\right),
\end{align*}
where
$L_a(j,s)=\sum_{j=i_{s+1}<i_s<\cdots<i_1<i_0=a}\prod_{\ell=0}^s\beta_{i_\ell
i_{\ell+1}}$.
\end{lemma}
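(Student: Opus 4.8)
The plan is to prove the identity by strong induction on $a$, using recurrence \eqref{eqcA4}, $H_a=x^ayH+\sum_{b=1}^{a-1}\beta_{ab}H_b$, as the engine. The underlying intuition is that iterating \eqref{eqcA4} terminates because the subscripts strictly decrease, and the coefficient of $x^jyH$ that one accumulates along the way is exactly the sum over all strictly decreasing chains from $a$ down to $j$ of the associated products of $\beta$'s; the quantity $L_a(j,s)$ simply records the contribution of the chains with $s+1$ factors.

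For the base case $a=1$ the sum $\sum_{b=1}^{0}$ is empty, so \eqref{eqcA4} gives $H_1=xyH$. On the other hand any chain $1=i_{s+1}<i_s<\cdots<i_0=1$ is impossible, so $L_1(1,s)=0$ for all $s\geq0$, and the claimed formula also reduces to $H_1=xyH$.

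For the inductive step, assume the formula holds for every index smaller than $a$ and substitute it into \eqref{eqcA4}. After cancelling the common term $x^ayH$, it remains to verify
$$\sum_{b=1}^{a-1}\beta_{ab}\Bigl(x^by+\sum_{j=1}^{b}x^jy\sum_{s\geq0}L_b(j,s)\Bigr)=\sum_{j=1}^{a}x^jy\sum_{s\geq0}L_a(j,s).$$
I would split the right-hand side according to $s=0$ versus $s\geq1$. Since $L_a(j,0)=\beta_{aj}$ for $j<a$ while $L_a(a,0)=0$, the $s=0$ part contributes precisely $\sum_{b=1}^{a-1}\beta_{ab}x^by$, which matches the first group on the left. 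For $s\geq1$, I would peel off the outermost factor $\beta_{i_0i_1}=\beta_{ai_1}$ from each chain and re-index: writing $b=i_1$ (which forces $j<b<a$) and shifting all chain indices down by one converts $\sum_{s\geq1}$ over chains terminating at $a$ into $\sum_{b=j+1}^{a-1}\beta_{ab}\sum_{s'\geq0}L_b(j,s')$. Interchanging the order of summation over $b$ and $j$, and using $L_b(b,s)=0$ so that the $j=b$ terms drop out, reproduces the remaining group on the left, closing the induction.

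The main obstacle is entirely bookkeeping rather than analysis: one must track the edge cases carefully — the empty chain ($s=0$) producing the single factor $\beta_{aj}$, the vanishing of $L_b(j,s)$ whenever $j\geq b$, and the strict inequality $j<i_1$ that appears after removing the first factor — so that the index ranges on the two sides line up exactly. Once the re-indexing of the nested sum defining $L_a(j,s)$ is set up correctly, the identity falls out term by term.
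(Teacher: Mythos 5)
Your proof is correct and follows essentially the same route as the paper: induction on $a$ driven by \eqref{eqcA4}, substituting the inductive hypothesis and re-indexing the decreasing chains by peeling off (equivalently, in the paper's phrasing, prepending) the outermost factor $\beta_{a i_1}$, with the $s=0$ chains accounting for the $\sum_b \beta_{ab}x^by$ terms. The edge cases you flag ($L_b(j,s)=0$ for $j\geq b$, the strict inequalities after re-indexing) are exactly the bookkeeping the paper's computation handles implicitly.
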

\begin{proof}
We prove it by induction on $a$. For $a=1$, this gives
$H_1=xyH$ as expected (by removing the leftmost column of the
bargraph $1\pi'$). Assume that the claim holds for $1,2,\cdots,a$,
and let us prove it for $a+1$. By \eqref{eqcA4}, we have
$$H_{a+1}=x^{a+1}yH+\sum_{b=1}^{a}\beta_{(a+1)b}H_b.$$
Thus, by induction assumption, we obtain
\begin{align*}
H_{a+1}&=x^{a+1}yH+\sum_{b=1}^{a}\beta_{(a+1)b}H\left(x^by+\sum_{j=1}^bx^jy\left(\sum_{s\geq0}L_b(j,s)\right)\right)\\
&=H\left(x^{a+1}y+\sum_{b=1}^{a}\beta_{(a+1)b}x^by+\sum_{b=1}^{a}\sum_{j=1}^bx^jy\beta_{(a+1)b}\left(\sum_{s\geq0}L_b(j,s)\right)\right)\\
&=H\left(x^{a+1}y+\sum_{b=1}^{a}\beta_{(a+1)b}x^by\right)\\
&\qquad+H\left(\sum_{j=1}^{a}\sum_{b=j}^ax^jy\left(\sum_{s\geq0}\sum_{j=i_{s+1}<i_s<\cdots<i_1<i_0=b<i_{-1}=a+1}\prod_{\ell=-1}^s\beta_{i_\ell i_{\ell+1}}\right)\right)\\
&=H\left(x^{a+1}y+\sum_{b=1}^{a}x^by\sum_{i_1=b<i_0=a+1}\beta_{i_0i_1}\right)\\
&\qquad+H\sum_{j=1}^ax^jy\left(\sum_{s\geq0}\sum_{j=i_{s+1}<i_s<\cdots<i_1<i_0<i_{-1}=a+1}\prod_{\ell=-1}^s\beta_{i_\ell i_{\ell+1}}\right)\\
&=H\left(x^{a+1}y+\sum_{j=1}^{a}x^jy\left(\sum_{s\geq0}L_{a+1}(j,s)\right)\right),
\end{align*}
which completes the proof.
\end{proof}

By \eqref{eqcA1} and Lemma \ref{lemcA1}, we can state our first
main result.

\begin{theorem}\label{mth1}
The generating function $H(x,y,\qq)$ is given by
\begin{equation*}
H(x,y,\qq)=\frac{1}{1-\frac{xy}{1-x}-\sum_{j\geq 1}\left(x^jy
\sum_{s\geq0}L(j,s)\right)},
\end{equation*}
where
$L(j,s)=\sum_{j=i_{s+1}<i_s<\cdots<i_1<i_0}\prod_{\ell=0}^s\beta_{i_\ell
i_{\ell+1}}$.
\end{theorem}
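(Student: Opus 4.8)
The plan is to feed the closed form for $H_a$ from Lemma~\ref{lemcA1} into the relation \eqref{eqcA1} and then solve the resulting linear equation for $H$.

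First I would write $H=1+\sum_{a\geq1}H_a$ and substitute $H_a=H\left(x^ay+\sum_{j=1}^a x^jy\sum_{s\geq0}L_a(j,s)\right)$, which after factoring out $H$ gives
$$H=1+H\left(\sum_{a\geq1}x^ay+\sum_{a\geq1}\sum_{j=1}^a x^jy\sum_{s\geq0}L_a(j,s)\right).$$
The first inner sum is the geometric series $\sum_{a\geq1}x^ay=\frac{xy}{1-x}$. For the double sum I would interchange the order of summation, rewriting $\sum_{a\geq1}\sum_{j=1}^a$ as $\sum_{j\geq1}\sum_{a\geq j}$, and then observe that $\sum_{a\geq j}L_a(j,s)=L(j,s)$. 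Indeed, $L_a(j,s)$ is by definition the sum over strictly decreasing chains $j=i_{s+1}<i_s<\cdots<i_1<i_0=a$ of $\prod_{\ell=0}^s\beta_{i_\ell i_{\ell+1}}$, so summing over all admissible top values $a$ simply drops the constraint that $i_0$ equal a prescribed integer (and $L_a(j,s)=0$ when $a\leq j$, so enlarging the range of $a$ is harmless), which is precisely the definition of $L(j,s)$. Hence
$$H=1+H\left(\frac{xy}{1-x}+\sum_{j\geq1}x^jy\sum_{s\geq0}L(j,s)\right),$$
and solving for $H$ yields the stated formula.

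I expect the manipulations to be routine; the only points that need care are the bookkeeping in the interchange of sums and the check that all the infinite sums are well defined as formal power series. This last point follows because $\alpha_{ab}$ has $x$-valuation at least $a$ (its lowest term, at $m=1$, is $x^ay\,q_{a-b,1}$), so $\beta_{ab}=\alpha_{ab}(1-x^by)-x^ay$ also has $x$-valuation at least $a$; consequently a product $\prod_{\ell=0}^s\beta_{i_\ell i_{\ell+1}}$ along a chain with top $i_0$ has $x$-valuation at least $i_0+i_1+\cdots+i_s$, so only finitely many pairs $(j,s)$ and finitely many chains contribute to any fixed power of $x$. In particular $1-\frac{xy}{1-x}-\sum_{j\geq1}x^jy\sum_{s\geq0}L(j,s)$ has constant term $1$, hence is invertible in the ring of formal power series, so the division is legitimate and the formula for $H$ is unambiguous.
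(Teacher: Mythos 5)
Your proposal is correct and follows essentially the same route as the paper, which derives Theorem~\ref{mth1} directly by substituting the formula of Lemma~\ref{lemcA1} into \eqref{eqcA1} and solving for $H$; your interchange of summation and the identity $\sum_{a\geq j}L_a(j,s)=L(j,s)$ make explicit the bookkeeping the paper leaves implicit, and the valuation argument for formal-power-series convergence is a sound (if unstated in the paper) addition.
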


For instance, if $q_{a,b}=1$ for all $a,b\geq 1$, then
$\alpha_{ab}=\sum_{m\geq 1}x^{a+b(m-1)}y^m=\frac{x^ay}{1-x^by}$,
which yields $\beta_{ab}=\alpha_{ab}(1-x^by)-x^ay=0$. Thus, in
this case, Theorem \ref{mth1} shows that
$H(x,y,1,1,\ldots)=\frac{1-x}{1-x-xy}$, as expected.
\vspace{1cc}
\begin{center}
{\bf 2.1. Counting all corners of type A}
\end{center}

Let $q_{a,b}=q$ for all $a,b\geq 1$. From the definitions, we have
$\alpha_{ab}=q\frac{x^ay}{1-x^by}$ and $\beta_{ab}=(q-1)x^ay$. Therefore,
\begin{align*}
L(j,s)&=\sum_{j=i_{s+1}<i_s<\cdots<i_1<i_0}\prod_{\ell=0}^s(q-1)x^{i_{\ell}y}\\
&=\sum_{j=i_{s+1}<i_s<\cdots<i_1<i_0}(q-1)^{s+1}y^{s+1}x^{i_0+i_1+\cdots+i_s}\\
&=(q-1)^{s+1}y^{s+1}\frac{x^{(s+1)j+\binom{s+2}{2}}}{(1-x)(1-x^2)\cdots(1-x^{s+1})}.
\end{align*}
Thus the generating function $F=H(x,y,q,q,\cdots)$ is given by
\begin{align*}
F&=\frac{1}{1-\frac{xy}{1-x}-\displaystyle \sum_{j\geq 1}x^jy\sum_{s\geq 0}\frac{(q-1)^{s+1}y^{s+1}x^{(s+1)j+\binom{s+2}{2}}}{(1-x)(1-x^2)\cdots(1-x^{s+1})}}\\
&=\frac{1}{1-\frac{xy}{1-x}-\displaystyle \sum_{s\geq 0}\frac{(q-1)^{s+1}y^{s+2}x^{(s+2)+\binom{s+2}{2}}}{(1-x)(1-x^2)\cdots(1-x^{s+2})}}\\
&=\frac{1}{1-\frac{xy}{1-x}-\displaystyle \sum_{s\geq 1}\frac{(q-1)^sy^{s+1}x^{\binom{s+2}{2}}}{(1-x)(1-x^2)\cdots(1-x^{s+1})}}.
\end{align*}

Let $\cora(\pi)$ be the number of corners of type A in $\pi$.
We define $g_{n,k}=\sum_{\pi \in \BB_{n,k}}\cora(\pi)$ and $g_n=\sum_{k\geq 1}g_{n,k}$.
Let $G(x,y)=\sum_{n,k\geq 1}g_{n,k}x^ny^k$ be the generating function for the total number of type A corners over all bargraphs according to the number of cells and columns.  Then, it follows that
\begin{align*}
G(x,y)=\frac{\partial F}{\partial
q}\Bigr|_{\substack{q=1}}=\frac{y^2x^3}{(1-x-xy)^2(1+x)}.
\end{align*}
Note that $G(x,1)=\frac{x^3}{(1-2x)(1+x)}$ is the generating function for the total number of type A corners over all bargraphs according to the number of cells. Hence,
\begin{equation}
g_n=\left(\frac{n+1}{12}-\frac{2}{9}\right)2^n-\frac{1}{9}(-1)^n.
\end{equation}

\vspace{1.1cc}
\begin{center}
{\bf 2.2. Counting $(v,w)$-corners of type A}
\end{center}

Fix $v,w\geq1$. Define $q_{v,w}=q$ and $q_{a,b}=1$ for all
$(a,b)\neq(v,w)$. Then we have
\begin{align*}
\alpha_{ab}&=\sum_{m\geq1} x^{a+b(m-1)}y^{m}q_{a-b,m}=\sum_{m\geq1} x^{a+b(m-1)}y^{m}+x^{a+b(w-1)}y^w(q_{a-b,w}-1)\\
&=\frac{x^ay}{1-x^by}+x^{a+b(w-1)}y^w(q-1)\delta_{a-b=v},
\end{align*}
where $\delta_{\chi}=1$ if $\chi$ holds, and $\delta_{\chi}=0$ otherwise. Hence,
\begin{align}\label{eqcA5}
\beta_{ab}&=\alpha_{ab}(1-x^by)-x^ay
=x^{a+b(w-1)}y^w(q-1)\delta_{a-b=v}(1-x^by).
\end{align}
Recall that $L(j,s)=\sum_{j=i_{s+1}<i_s<\cdots<i_1<i_0}\prod_{\ell=0}^s\beta_{i_\ell
i_{\ell+1}}$. By using \eqref{eqcA5}, we have
{\small\begin{align*}
&L(j,s)\\
&=\sum_{j=i_{s+1}<i_s<\cdots<i_1<i_0}\prod_{\ell=0}^s\left(x^{i_{\ell}+i_{\ell+1}(w-1)}y^w(q-1)\delta_{i_{\ell}-i_{\ell+1}=v}(1-x^{i_{\ell+1}}y)\right)\\
&=\sum_{j=i_{s+1}<i_s<\cdots<i_1<i_0}(q-1)^{s+1}y^{w(s+1)}x^{\sum_{\ell=0}^si_{\ell}+(w-1)i_{\ell+1}}\prod_{\ell=0}^s\left(\delta_{i_{\ell}-i_{\ell+1}=v}(1-x^{i_{\ell+1}}y)\right)\\
&=(q-1)^{s+1}y^{w(s+1)}x^{wj(s+1)+v\binom{s+2}{2}+(w-1)v\binom{s+1}{2}}\prod_{\ell=0}^s(1-x^{j+(s-l)v}y).
\end{align*}}
From Theorem~\ref{mth1}, we obtain that the generating function
$F=H(x,y,\qq)$ is given by
{\small\begin{equation*}
F=\frac{1}{1-\frac{xy}{1-x}-\displaystyle \sum_{j\geq
1}x^jy\sum_{s\geq
0}(q-1)^{s+1}y^{w(s+1)}x^{wj(s+1)+v\binom{s+2}{2}+(w-1)v\binom{s+1}{2}}\prod_{\ell=0}^s(1-x^{j+\ell
v}y)}.
\end{equation*}}
Recall that $\Lambda_{(v,w)}(\pi)$ denotes the number of $(v,w)$-corners of type A in $\pi$.
We define $t_{n,k}=\sum_{\pi \in \BB_{n,k}}\Lambda_{(v,w)}(\pi)$ and $t_n=\sum_{k\geq 1}t_{n,k}$.
Let $T(x,y)=\sum_{n,k\geq 1}t_{n,k}x^ny^k$ be the generating function for the total number of $(v,w)$-corners of type A over all bargraphs according to the number of cells and columns.  Then, it follows that
\begin{align*}
T(x,y)=\frac{\partial F}{\partial
q}\Bigr|_{\substack{q=1}}=\frac{x^{v+w+1}y^{w+1}}{(1-\frac{xy}{1-x})^2}\frac{1-xy-x^{w+2}(1-y)}{(1-x^{w+1})(1-x^{w+2})},
\end{align*}
which leads to
$$T(x,1)=\frac{x^{v+w+1}}{(1-2x)^2}\frac{(1-x)^3}{(1-x^{w+1})(1-x^{w+2})};$$
the generating function for the total number of $(v,w)$-corners of
type A over all bargraphs according to the number of cells. As
a consequence, we have the following result.
\begin{corollary}
The total number of $(v,w)$-corners of type A over all bargraphs
having $n$ cells is given by
$t_n=\frac{n}{(2^{w+1}-1)(2^{w+2}-1)}2^{w-v+n-1}$.
\end{corollary}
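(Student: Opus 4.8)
The plan is to extract the asymptotics of $t_n=[x^n]T(x,1)$ from the rational function
\[
T(x,1)=\frac{x^{v+w+1}(1-x)^3}{(1-2x)^2(1-x^{w+1})(1-x^{w+2})}
\]
obtained above; the closed form in the statement is the dominant term of this coefficient. First I would pin down the singularities. Writing $1-x^{w+1}=(1-x)(1+x+\cdots+x^{w})$ and $1-x^{w+2}=(1-x)(1+x+\cdots+x^{w+1})$, the $(1-x)^3$ in the numerator cancels the $(1-x)^2$ in the denominator, so
\[
T(x,1)=\frac{x^{v+w+1}(1-x)}{(1-2x)^2\,(1+x+\cdots+x^{w})(1+x+\cdots+x^{w+1})},
\]
and in particular $x=1$ is a regular point. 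The unique pole of modulus less than $1$ is the double pole at $x=\tfrac12$; every other pole is a zero of $1+x+\cdots+x^{w}$ or of $1+x+\cdots+x^{w+1}$ and hence lies on $|x|=1$, and all of these are simple, since each of the two polynomials is squarefree, they are coprime to one another because $\gcd(w+1,w+2)=1$, and neither vanishes at $x=\tfrac12$. Thus $x=\tfrac12$ is the dominant singularity and is strictly closer to the origin than all others.

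Next I would compute the contribution of the double pole. Set $\phi(x)=\dfrac{x^{v+w+1}(1-x)^3}{(1-x^{w+1})(1-x^{w+2})}$, which is analytic at $x=\tfrac12$, so $T(x,1)=\phi(x)/(1-2x)^2$. Expanding $\phi$ about $x=\tfrac12$ and using $x-\tfrac12=-\tfrac12(1-2x)$ gives
\[
T(x,1)=\frac{\phi(1/2)}{(1-2x)^2}-\frac{\phi'(1/2)/2}{1-2x}+R(x),
\]
where $R$ is analytic at $x=\tfrac12$ and its remaining finite singularities are the simple roots-of-unity poles found above, so that $[x^n]R(x)=O(1)$. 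Since $[x^n](1-2x)^{-2}=(n+1)2^n$ and $[x^n](1-2x)^{-1}=2^n$, this gives
\[
t_n=\phi(1/2)\,(n+1)2^n-\tfrac12\phi'(1/2)\,2^n+O(1),
\]
and in particular $t_n\sim\phi(1/2)\,n\,2^n$ as $n\to\infty$.

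Finally I would evaluate $\phi(1/2)$. Using $1-(1/2)^{w+1}=\dfrac{2^{w+1}-1}{2^{w+1}}$ and $1-(1/2)^{w+2}=\dfrac{2^{w+2}-1}{2^{w+2}}$,
\[
\phi(1/2)=\frac{(1/2)^{v+w+1}(1/2)^{3}\cdot 2^{w+1}2^{w+2}}{(2^{w+1}-1)(2^{w+2}-1)}=\frac{2^{w-v-1}}{(2^{w+1}-1)(2^{w+2}-1)},
\]
so that $\phi(1/2)\,n\,2^n=\dfrac{n}{(2^{w+1}-1)(2^{w+2}-1)}\,2^{w-v+n-1}$, which is the asserted expression for $t_n$. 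The step I expect to require the most care is the singularity bookkeeping in the first paragraph — verifying that no pole other than $x=\tfrac12$ survives inside the unit disc after the cancellation of $(1-x)^2$, which is exactly the coprimality-and-squarefreeness observation; the coefficient extraction and the evaluation of $\phi(1/2)$ are then routine computations.
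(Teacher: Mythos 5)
Your approach is exactly the one the paper intends: the corollary is read off from $T(x,1)=\frac{x^{v+w+1}(1-x)^3}{(1-2x)^2(1-x^{w+1})(1-x^{w+2})}$ by isolating the double pole at $x=\tfrac12$, and your singularity bookkeeping (cancellation of $(1-x)^2$, simplicity and unit modulus of the remaining poles, evaluation of $\phi(1/2)$) is all correct. The one substantive point your careful version surfaces is that what you actually prove is $t_n=\phi(1/2)(n+1)2^n-\tfrac12\phi'(1/2)2^n+O(1)$, i.e.\ only $t_n\sim \frac{n}{(2^{w+1}-1)(2^{w+2}-1)}2^{w-v+n-1}$: the subdominant $2^n$ term does not vanish (for $v=w=1$ one finds $\phi(1/2)-\tfrac12\phi'(1/2)=-\tfrac{1}{441}\neq0$), and the stated closed form cannot be an exact identity --- for $v=w=1$, $n=3$ it evaluates to $4/7$, whereas a direct count of $(1,1)$-corners of type A over the four bargraphs with $3$ cells gives $t_3=1$. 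So the corollary as written (``is given by'') should be read as an asymptotic equivalence, in contrast with the earlier exact formula for $g_n$ where the full partial-fraction decomposition has finitely many nice terms; your proof establishes precisely this corrected asymptotic statement, and you are right to flag the leading-term identification as the step where the claim silently drops from equality to asymptotics.
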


\vspace{.5cc}
\begin{center}
{\bf 2.3. Restricted bargraphs}
\end{center}

Theorem \ref{mth1} can be refined as follows. Fix $N\geq1$. Let
$H^{(N)}:=H^{(N)}(x,y,\qq)$ be the generating function for the
number of bargraphs $\pi$ such that the height of each column is at most $N$ according to the number of cells in $\pi$, the
number of columns of $\pi$, and the number of $(a,b)$-corners of
type A in $\pi$ corresponding to the variables $x,y$ and
$\qq=(q_{a,b})_{a,b\geq 1}$ respectively. Then by using similar
arguments as in the proof of \eqref{eqcA4}, we obtain
\begin{equation}\label{eqcNA1}
H_a^{(N)}=x^ayH^{(N)}+\sum_{b=1}^{a-1}\beta_{ab}H_b^{(N)},
\end{equation}
where $H^{(N)}_a:=H^{(N)}_a(x,y,\qq)$ is the generating function
for the number of bargraphs $\pi=a\pi'$ such that the height of
each column is a maximum of $N$. Clearly,
$H^{(N)}=1+\sum_{a=1}^NH_a^{(N)}$. By the proof of Theorem
\ref{mth1}, we can state its extension as follows.
\begin{theorem}\label{mthN1}
The generating function $H^{(N)}(x,y,\qq)$ is given by
\begin{equation*}
H^{(N)}(x,y,\qq)=\frac{1}{1-y\sum_{j=1}^Nx^j-\sum_{j=1}^N\left(x^jy
\sum_{s\geq0}L(j,s)\right)},
\end{equation*}
where $$L(j,s)=\sum_{j=i_{s+1}<i_s<\cdots<i_1<i_0\leq
N}\prod_{\ell=0}^s\beta_{i_\ell i_{\ell+1}}.$$ Moreover, for all
$a=1,2,\ldots,N$, we have
\begin{align*}
H_a^{(N)}=H^{(N)}\left(x^ay+\sum_{j=1}^a\left(x^jy\sum_{s\geq0}L_a(j,s)\right)\right),
\end{align*}
where
$L_a(j,s)=\sum_{j=i_{s+1}<i_s<\cdots<i_1<i_0=a}\prod_{\ell=0}^s\beta_{i_\ell
i_{\ell+1}}$.
\end{theorem}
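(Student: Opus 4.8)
The plan is to mirror the two-step derivation of Theorem~\ref{mth1}: first establish the closed form for $H_a^{(N)}$ stated in the second half of the theorem by induction on $a$, and then feed it into the relation $H^{(N)}=1+\sum_{a=1}^{N}H_a^{(N)}$ and collect terms to isolate $H^{(N)}$.

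For the induction, the key observation is that the recursion \eqref{eqcNA1}, namely $H_a^{(N)}=x^ayH^{(N)}+\sum_{b=1}^{a-1}\beta_{ab}H_b^{(N)}$, has exactly the same shape as \eqref{eqcA4}; the only change is that $a$ now ranges over $\{1,2,\dots,N\}$ rather than over all positive integers, while the coefficients $\beta_{ab}$ are unchanged. Consequently the computation carried out in the proof of Lemma~\ref{lemcA1} applies word for word: the base case $a=1$ gives $H_1^{(N)}=xyH^{(N)}$, and the inductive step reorganizes $\sum_{b=1}^{a}\beta_{(a+1)b}H_b^{(N)}$, via the induction hypothesis, into $H^{(N)}\bigl(x^{a+1}y+\sum_{j=1}^{a+1}x^jy\sum_{s\ge0}L_{a+1}(j,s)\bigr)$. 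Here nothing escapes the cap $N$, because $L_a(j,s)=\sum_{j=i_{s+1}<\cdots<i_1<i_0=a}\prod_{\ell=0}^{s}\beta_{i_\ell i_{\ell+1}}$ already forces every index $i_\ell\le i_0=a\le N$.

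It remains to sum over $a$. From $H^{(N)}=1+\sum_{a=1}^{N}H_a^{(N)}$ and the formula just proved,
\begin{equation*}
H^{(N)}=1+H^{(N)}\left(y\sum_{a=1}^{N}x^a+\sum_{a=1}^{N}\sum_{j=1}^{a}x^jy\sum_{s\ge0}L_a(j,s)\right).
\end{equation*}
Interchanging the $a$- and $j$-summations (a finite reindexing) turns the double sum into $\sum_{j=1}^{N}x^jy\sum_{s\ge0}\sum_{a=j}^{N}L_a(j,s)$, and since in $L_a(j,s)$ the outermost index $i_0=a$ automatically satisfies $i_0>i_{s+1}=j$, letting $a$ range over $\{j,\dots,N\}$ exactly reconstitutes the condition $j=i_{s+1}<i_s<\cdots<i_1<i_0\le N$ of the theorem; that is, $\sum_{a=j}^{N}L_a(j,s)=L(j,s)$. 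Solving the resulting linear equation for $H^{(N)}$ then yields the claimed closed form.

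The argument is essentially bookkeeping, and the only point requiring care is checking that the height restriction propagates consistently — that every index appearing in $L_a$ and $L$ stays $\le N$, and that the interchange of the $a$- and $j$-sums is the harmless finite manipulation it appears to be. As a bonus, in the restricted setting the sum over $s$ is in fact finite, since $L(j,s)=0$ once $s>N-j-1$, so the only infinite summations remaining are the formal power series already hidden inside the $\beta_{ab}$.
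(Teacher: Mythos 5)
Your proposal is correct and follows exactly the route the paper intends: the paper itself justifies Theorem~\ref{mthN1} by remarking that the recursion \eqref{eqcNA1} has the same form as \eqref{eqcA4}, so the induction of Lemma~\ref{lemcA1} and the summation step from the proof of Theorem~\ref{mth1} carry over with all indices capped at $N$. Your write-up merely makes explicit the details the paper leaves implicit (the interchange of the $a$- and $j$-sums, the identity $\sum_{a=j}^{N}L_a(j,s)=L(j,s)$, and the observation that $L(j,s)$ vanishes for $s>N-j-1$), all of which check out.
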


For instance, Theorem \ref{mthN1} for $N=1,2$ gives
$H^{(1)}(x,y,\qq)=\frac{xy}{1-xy}$ and
$$H^{(2)}(x,y,\qq)=\frac{1}{1-(x+x^2)y-x^2(1-xy)\sum_{m\geq1}x^my^mq_{1,m}+x^3y^2}.$$

\vspace{.8cc}
\begin{center}
{\bf 2.4. Counting corners of type A in set partitions}
\end{center}

Recall that we represent any set
partition as a bargraph corresponding to its canonical sequential representation. Let $P_k(x,y,\qq)$ be the generating function for the number of
set partitions $\pi$ of $[n]$ with exactly $k$ blocks according to
the number of cells in $\pi$, the number of columns of $\pi$
(which is $n$), and the number of $(a,b)$-corners of type A in
$\pi$ corresponding to the variables $x,y$ and
$\qq=(q_{a,b})_{a,b\geq 1}$ respectively.

Note that each set partition with exactly $k$ blocks can be
decomposed as $1\pi^{(1)}\cdots k\pi^{(k)}$ such that $\pi^{(j)}$
is a word over alphabet $[j]$. Thus, by Theorem \ref{mthN1}, we
have the following result.
\begin{theorem}\label{mthPk1}
The generating function $P_k(x,y,\qq)$ is given by
\begin{align*}
P_k(x,y,\qq)=\prod_{N=1}^kH^{(N)}_N(x,y,\qq)=\prod_{N=1}^k\frac{x^Ny+\sum_{j=1}^N\left(x^jy\sum_{s\geq0}L_N(j,s)\right)}
{1-y\sum_{j=1}^Nx^j-\sum_{j=1}^N\left(x^jy\sum_{s\geq0}L(j,s)\right)},
\end{align*}
where $$\displaystyle L(j,s)=\sum_{j=i_{s+1}<i_s<\cdots<i_0\leq
N} \prod_{\ell=0}^s\beta_{i_\ell i_{\ell+1}}\mbox{ and }\displaystyle L_N(j,s)=\sum_{j=i_{s+1}<i_s<\cdots<i_0=N} \prod_{\ell=0}^s\beta_{i_\ell i_{\ell+1}}.$$
\end{theorem}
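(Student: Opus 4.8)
The plan is to decompose the bargraph of a set partition with $k$ blocks into $k$ consecutive sub-bargraphs, one per block, to check that the number of cells, the number of columns, and the number of $(a,b)$-corners of type A are all additive under this concatenation, and then to read off the product from Theorem~\ref{mthN1}.

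First, recall the structure of the canonical sequential representation $\pi=\pi_1\cdots\pi_n$ of a set partition of $[n]$ with $k$ blocks: the letter $1$ occupies position $1$, and for $2\le j\le k$ the first occurrence of $j$ comes after the first occurrence of $j-1$. Hence $\pi$ factors uniquely as $\pi=\beta^{(1)}\beta^{(2)}\cdots\beta^{(k)}$, where $\beta^{(N)}$ is the factor that starts at the first occurrence of $N$ and ends just before the first occurrence of $N+1$ (with $\beta^{(k)}$ running to the end). Written as a bargraph, $\beta^{(N)}=N\pi^{(N)}$ with $\pi^{(N)}$ a word over $[N]$; equivalently, $\beta^{(N)}$ is a bargraph whose leftmost column has height exactly $N$ and all of whose columns have height at most $N$ --- precisely the bargraphs enumerated by $H^{(N)}_N(x,y,\qq)$. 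Conversely, any $k$-tuple $(\beta^{(1)},\dots,\beta^{(k)})$ of bargraphs of these types concatenates to the canonical sequential representation of a set partition with exactly $k$ blocks: the first occurrence of each $j\le k$ sits at the head of $\beta^{(j)}$ and is preceded only by heights strictly less than $j$, while the overall maximum height is $k$. So this correspondence is a bijection, and the number of columns and the number of cells plainly add over the factors.

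The heart of the matter is that the type A corner statistic is additive as well. Since the last column of $\beta^{(N)}$ has height at most $N$ while the first column of $\beta^{(N+1)}$ has height $N+1$, every junction between consecutive factors is an ascent; hence no $\llcorner$ corner of $\pi$ can sit at a junction, and every type A corner of $\pi$ lies in the interior of a single factor $\beta^{(N)}$, at the foot of an interior descent of that factor. For such a corner, its first parameter is the length of the maximal down-run, which is the descent itself and lies inside $\beta^{(N)}$, and its second parameter is the number of consecutive columns of the valley height immediately following the descent; these columns all lie in $\beta^{(N)}$ --- the run cannot spill into $\beta^{(N+1)}$, whose first column is strictly taller than the valley --- so this number is the same whether $\beta^{(N)}$ is read inside $\pi$ or on its own. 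Hence the type A corners of $\pi$, with their $(a,b)$-labels, are exactly the disjoint union of those of the standalone bargraphs $\beta^{(1)},\dots,\beta^{(k)}$, and $\Lambda_{(a,b)}(\pi)=\sum_{N=1}^k\Lambda_{(a,b)}(\beta^{(N)})$ for all $a,b\ge1$.

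Combining these observations, $P_k(x,y,\qq)$ factors as $\prod_{N=1}^k H^{(N)}_N(x,y,\qq)$, and substituting the closed form of $H^{(N)}_N$ from Theorem~\ref{mthN1} gives the stated expression. The factorization $\pi=\beta^{(1)}\cdots\beta^{(k)}$ is the standard ``first-occurrence'' splitting of a set partition, so the only step that needs real care is the additivity of the corner statistic across junctions --- that gluing the blocks neither creates nor destroys a type A corner nor changes an $(a,b)$-label --- and this hinges entirely on the blocks being increasing, with $\beta^{(N+1)}$ beginning strictly above every column of $\beta^{(1)},\dots,\beta^{(N)}$.
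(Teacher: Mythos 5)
Your proof is correct and follows essentially the same route as the paper, which simply notes that every set partition with $k$ blocks decomposes as $1\pi^{(1)}\cdots k\pi^{(k)}$ with $\pi^{(j)}$ a word over $[j]$ and then invokes Theorem~\ref{mthN1}. You additionally spell out the one point the paper leaves implicit --- that the $(a,b)$-corner statistic of type A is additive across the junctions because each factor $\beta^{(N+1)}$ begins strictly higher than all columns of the preceding factors --- which is a welcome clarification rather than a deviation.
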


Now, we consider counting all corners of type A in set partitions. Let $q_{a,b}=q$ for all $a,b\geq 1$, and
$Q_k(x,y)=\frac{\partial}{\partial q}P_k(x,y,\qq)\Bigr|_{\substack{q=1}}$.
Note that for any $s\geq 0$ and $1\leq j\leq N-1$,
$$L_N(j,s)=(q-1)^{s+1}y^{s+1}\sum_{j=i_{s+1}<i_s<\cdots<i_0=N}x^{\sum_{\ell=0}^sx_{\ell}}.$$
We have a similar expression for $L(j,s)$.
From Theorem~\ref{mthPk1}, we have the generating function $Q_k(x,y)$ given by
\begin{align*}
&Q_k(x,y)\\
&=\prod_{N=1}^k\frac{x^Ny}{1-y\sum_{j=1}^Nx^j}
\sum_{N=1}^k\frac{\sum_{j=1}^{N-1}\left(x^jy(1-y\sum_{j=1}^Nx^j)+x^jy^2(x^{j+1}+\cdots+x^N)\right)} {1-y\sum_{j=1}^Nx^j}.
\end{align*}
Let $\phi(t)=\frac{t^k}{(1-t)(1-2t)\cdots(1-kt)}$. Then we have $\phi^{\prime}(t)=\frac{t^{k-1}}{(1-t)\cdots(1-kt)}\sum_{j=1}^k\frac{1}{1-jt}$.

Note that
\begin{align*}
Q_k(1,t)&=\phi(t)\sum_{N=2}^k\left((N-1)t+\frac{t^2\sum_{j=1}^{N-1}(N-j)}{1-Nt}\right)\\
&=\phi(t) t\sum_{N=2}^k\left((N-1)+\frac{t\binom{N}{2}}{1-Nt}\right)\\
&=\phi(t)t\left(\binom{k}{2}+\frac{1}{2}\sum_{N=2}^k\frac{tN(N-1)}{1-Nt}\right)\\
&=\phi(t)t\left(\frac{1}{2}\binom{k}{2}+\frac{1}{2}\sum_{N=1}^k\frac{N-1}{1-Nt}\right)\\
&=\frac{1}{2}\binom{k}{2}\phi(t)t+\frac{1}{2}\phi(t)\sum_{N=1}^k\left(-1-t\frac{1}{1-Nt}+\frac{1}{1-Nt}\right)\\
&=\frac{1}{2}\binom{k}{2}t\phi(t)-\frac{1}{2}k\phi(t)+\frac{1}{2}t\phi^{\prime}(t)-\frac{1}{2}t^2\phi^{\prime}(t).
\end{align*}
Let $q_{n,k}$ be the coefficient of $t^n$ in $Q_k(1,t)$. Define
$\tilde{Q}_k(t)=\sum_{n\geq k}q_{n,k}\frac{t^n}{n!}$ to be the exponential generating function for $q_{n,k}$. Recall that the ordinary and exponential generating functions for Stirling numbers of the second kind $S_{n,k}$ are given by $\phi(t)$ and
$\frac{(e^t-1)^k}{k!}$, respectively.

Thus,
\begin{align*}
\tilde{Q}_k(t)&=\frac{1}{2}\binom{k}{2}\int_0^t\frac{(e^r-1)^k}{k!}dr
-\frac{k(e^t-1)^k}{2k!}+\frac{kt(e^t-1)^{k-1}e^t}{2k!}\\
&-\int_0^t\frac{rk(e^r-1)^{k-1}e^r}{2k!}dr.
\end{align*}
Hence, the exponential generating function $\tilde{Q}(t,y)=\sum_{k\geq0}\tilde{Q}_k(t)y^k$ for the total number of corners over set partitions of $[n]$ with $k$ blocks is given by
\begin{align*}
\tilde{Q}(t,y)&=\frac{y^2}{4}\int_0^t(e^r-1)^2e^{y(e^r-1)}dr+\frac{yt}{2}e^{t+y(e^t-1)}\\
&-\frac{y}{2}(e^t-1)e^{y(e^t-1)}-\frac{y}{2}\int_0^tre^{r+y(e^r-1)}dr.
\end{align*}
In particular, we have
\begin{align*}
\frac{\partial}{\partial t}\tilde{Q}(t,y)&=\frac{y^2}{4}(2te^{2t+y(e^t-1)}-e^{2t+y(e^t-1)}+e^{y(e^t-1)})\\
&=\frac{2t-1}{4}\frac{\partial^2}{\partial t^2}e^{y(e^t-1)}-\frac{2t-1}{4}\frac{\partial}{\partial t}e^{y(e^t-1)}+\frac{y^2}{4}e^{y(e^t-1)}.
\end{align*}
Hence, we can state the following result.
\begin{theorem}\label{thmcAsp}
The total number of corners of type A over set partitions of $[n+1]$ with $k$ blocks is given by
$$\frac{n}{2}S_{n+1,k}-\frac{1}{4}S_{n+2,k}-\frac{n}{2}S_{n,k}+\frac{1}{4}S_{n+1,k}+
\frac{1}{4}S_{n,k-2}.$$
Moreover, the total number of corners of type A over set partitions of $[n+1]$ is given by
$$\frac{2n+1}{4}B_{n+1}-\frac{1}{4}B_{n+2}-\frac{2n-1}{4}B_n,$$
where $B_n$ is the $n^{th}$ Bell number.
\end{theorem}
\vspace{1.1cc}
\begin{center}
{\bf 3. Counting Corners of type B}
\end{center}

Let $J:=J(x,y,\pp)$ be the generating function for the number of
bargraphs $\pi$ according to the number of cells in $\pi$, the
number of columns of $\pi$, and the number of $(a,b)$-corners of
type B in $\pi$, corresponding to the variables $x,y$ and
$\pp=(p_{a,b})_{a,b\geq 1}$ respectively, that is,
$$J=\sum_{n\geq 0}\sum_{\pi\in \BB_n}x^ny^{\text{col}(\pi)}\prod_{a,b\geq
1}p_{a,b}^{\Lambda_{(a,b)}(\pi)},$$ where $\Lambda_{(a,b)}(\pi)$
denotes the number of $(a,b)$-corners of type B in $\pi$. From the
definitions, we have
\begin{equation}\label{eqcB1}
J=1+\sum_{a\geq 1}J_a,
\end{equation}
where $J_a$ is the generating function for the number of
bargraphs $\pi=a\pi'$ in which the height of the first column is $a$.
Since each bargraph $\pi=a\pi'$ can be decomposed as either
$\pi=a^m$, $\pi=a^mb\pi''$ with $b\geq a+1$, or $\pi=a^mb\pi''$
with $1\leq b\leq a-1$, we have
\begin{align}\label{eqcB2}
J_a&=\sum_{m\geq 1}x^{am}y^mp_{m,a} + \sum_{m\geq 1}x^{am}y^m(J_{a+1}+J_{a+2}+\cdots) \notag\\
& + \sum_{m\geq 1}\sum_{b=1}^{a-1}x^{am}y^mp_{m,a-b}J_b.
\end{align}
Define $\gamma_a:=\sum_{m\geq 1}x^{am}y^mp_{m,a}$. It follows from \eqref{eqcB1} that $$J-1-\sum_{b=1}^aJ_b=\sum_{b\geq a+1}J_b.$$ Then we
obtain
\begin{align*}
J_a&=\gamma_a+\frac{x^ay}{1-x^ay}\left(J-1-\sum_{b=1}^aJ_b\right)+\sum_{m\geq 1}\left(x^{am}y^m\sum_{b=1}^{a-1}p_{m,a-b}J_b\right)\\
&=\gamma_a+\frac{x^ay}{1-x^ay}(J-1)-\frac{x^ay}{1-x^ay}J_a+\sum_{m\geq
1}\left(x^{am}y^m\sum_{b=1}^{a-1}(p_{m,a-b}-1)J_b\right),
\end{align*}
which, by solving for $J_a$, gives
\begin{align*}
J_a&=x^ay(J-1)+(1-x^ay)\gamma_a+(1-x^ay)\sum_{m\geq
1}\left(x^{am}y^m\sum_{b=1}^{a-1}(p_{m,a-b}-1)J_b\right).
\end{align*}
If we define
\begin{equation*}
\theta_a:=x^ay(J-1)+(1-x^ay)\gamma_a \hbox{ and }\mu_{a,b}:=(1-x^ay)\sum_{m\geq 1}\left(x^{am}y^m(p_{m,a-b}-1)\right),
\end{equation*}
then we obtain
\begin{equation}\label{eqcB3}
J_a=\theta_a+\sum_{b=1}^{a-1}\mu_{a,b}J_b.
\end{equation}
By similar techniques as in the proof of Lemma \ref{lemcA1}, we
can state the following result.
\begin{lemma}\label{lemcB1}
For all $a\geq1$,
\begin{align*}
J_a=\theta_a+\sum_{j=1}^{a-1}\Gamma_{a,j}\theta_j,
\end{align*}
where
$\Gamma_{a,j}=\sum_{s\geq0}\sum_{j=i_{s+1}<i_s<\cdots<i_0=a}\prod_{\ell=0}^s\mu_{i_\ell
i_{\ell+1}}$.
\end{lemma}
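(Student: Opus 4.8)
The plan is to establish Lemma~\ref{lemcB1} by induction on $a$, in exact parallel with the proof of Lemma~\ref{lemcA1}: indeed, \eqref{eqcB3} has the same shape as \eqref{eqcA4} once one replaces $x^ayH$ by $\theta_a$ and $\beta_{ab}$ by $\mu_{a,b}$, so the same formal manipulation of nested chains applies. For the base case $a=1$ the sum $\sum_{j=1}^{a-1}$ is empty and \eqref{eqcB3} reads $J_1=\theta_1$, matching the claimed formula (whose sum is likewise empty). Assume now the formula holds for $1,2,\dots,a$ and consider $J_{a+1}$. By \eqref{eqcB3},
$$J_{a+1}=\theta_{a+1}+\sum_{b=1}^{a}\mu_{a+1,b}J_b,$$
and inserting the induction hypothesis $J_b=\theta_b+\sum_{j=1}^{b-1}\Gamma_{b,j}\theta_j$ for each $b$ gives
$$J_{a+1}=\theta_{a+1}+\sum_{b=1}^{a}\mu_{a+1,b}\theta_b+\sum_{b=1}^{a}\sum_{j=1}^{b-1}\mu_{a+1,b}\Gamma_{b,j}\theta_j.$$

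Next I would interchange the order of summation in the last term and read off the coefficient of $\theta_j$ for each $1\le j\le a$ (the coefficient of $\theta_{a+1}$ is already $1$, as required). That coefficient equals
$$\mu_{a+1,j}+\sum_{b=j+1}^{a}\mu_{a+1,b}\Gamma_{b,j},$$
the first summand coming from the $b=j$ term of $\sum_b\mu_{a+1,b}\theta_b$. The heart of the argument is to recognize this as $\Gamma_{a+1,j}$. Expanding $\Gamma_{b,j}=\sum_{s\ge0}\sum_{j=i_{s+1}<\cdots<i_0=b}\prod_{\ell=0}^{s}\mu_{i_\ell i_{\ell+1}}$ and prepending $a+1$ as a new largest index, the factor $\mu_{a+1,b}$ together with $\prod_{\ell=0}^{s}\mu_{i_\ell i_{\ell+1}}$ becomes $\prod_{\ell=0}^{s+1}\mu_{i'_\ell i'_{\ell+1}}$ along the chain $j=i'_{s+2}<i'_{s+1}<\cdots<i'_1=b<i'_0=a+1$; since the constraint $i'_1<i'_0=a+1$ is exactly $i'_1\le a$, letting $b$ run over $j+1,\dots,a$ and $s$ over $\ge 0$ enumerates precisely all chains from $a+1$ down to $j$ of length at least two, each exactly once. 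The extra term $\mu_{a+1,j}$ supplies the length-one chain (the $s=0$ term $i_1=j<i_0=a+1$), and the sum of the two is $\Gamma_{a+1,j}$. Hence $J_{a+1}=\theta_{a+1}+\sum_{j=1}^{a}\Gamma_{a+1,j}\theta_j$, which is the claim for $a+1$.

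I expect the only real obstacle to be bookkeeping: keeping the chain indices and the empty-sum conventions consistent when prepending $a+1$, and verifying that the passage between $\sum_{b=j+1}^{a}\mu_{a+1,b}\Gamma_{b,j}$ and the family of chains of length $\ge 2$ neither omits nor double-counts any chain. There is no analytic content here---the identity is purely formal in the ring of power series in $x$, $y$ and $\pp$---and once one checks that $\mu_{a+1,b}$ is defined for all $1\le b\le a$, the computation is essentially a verbatim transcription of the steps in the proof of Lemma~\ref{lemcA1} with $\beta$ replaced by $\mu$ and $x^ayH$ replaced by $\theta_a$.
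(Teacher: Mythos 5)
Your proof is correct and follows exactly the route the paper intends: the paper gives no separate argument for Lemma~\ref{lemcB1}, stating only that it follows ``by similar techniques as in the proof of Lemma~\ref{lemcA1},'' and your induction on $a$ via \eqref{eqcB3}, with the chain-prepending identity $\Gamma_{a+1,j}=\mu_{a+1,j}+\sum_{b=j+1}^{a}\mu_{a+1,b}\Gamma_{b,j}$, is precisely that transcription of the Lemma~\ref{lemcA1} argument with $\beta_{ab}$ replaced by $\mu_{a,b}$ and $x^ayH$ by $\theta_a$.
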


\begin{theorem}\label{mth2}The generating function $J(x,y,\pp)$ is given by
\begin{equation*}
J(x,y,\pp)=1+\frac{\sum_{m\geq 1}\sum_{j\geq 1}(1+\Gamma_j)(1-x^jy)x^{jm}y^mp_{m,j}}{1-\frac{xy}{1-x}-\sum_{j\geq 1}x^jy
\Gamma_j},
\end{equation*}
where $\Gamma_j=\sum_{s\geq 0}\sum_{j=i_{s+1}<i_s<\cdots<i_0}\prod_{\ell=0}^s\mu_{i_{\ell}i_{\ell+1}}$.
\end{theorem}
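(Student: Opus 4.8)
The plan is to combine equation \eqref{eqcB1} with the closed form for $J_a$ supplied by Lemma \ref{lemcB1}, and then to solve a single linear equation for $J$, exactly in the spirit of how Theorem \ref{mth1} was deduced from Lemma \ref{lemcA1}. First I would substitute $J_a=\theta_a+\sum_{j=1}^{a-1}\Gamma_{a,j}\theta_j$ into $J=1+\sum_{a\geq1}J_a$ to get
$$J-1=\sum_{a\geq1}\theta_a+\sum_{a\geq1}\sum_{j=1}^{a-1}\Gamma_{a,j}\theta_j.$$
Then I would interchange the order of summation in the double sum, rewriting it as $\sum_{j\geq1}\theta_j\sum_{a>j}\Gamma_{a,j}$, and observe that summing $\Gamma_{a,j}$ over all $a>j$ simply removes the constraint $i_0=a$ from the chains in the definition of $\Gamma_{a,j}$, so that $\sum_{a>j}\Gamma_{a,j}=\Gamma_j$ with $\Gamma_j$ as in the statement. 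This yields the compact identity $J-1=\sum_{j\geq1}(1+\Gamma_j)\theta_j$.

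Next I would substitute $\theta_j=x^jy(J-1)+(1-x^jy)\gamma_j$ and collect the terms proportional to $J-1$, using $\sum_{j\geq1}x^jy=\frac{xy}{1-x}$, to obtain
$$J-1=(J-1)\left(\frac{xy}{1-x}+\sum_{j\geq1}x^jy\,\Gamma_j\right)+\sum_{j\geq1}(1+\Gamma_j)(1-x^jy)\gamma_j.$$
Solving this linear equation for $J-1$, and then expanding $\gamma_j=\sum_{m\geq1}x^{jm}y^mp_{m,j}$ in the numerator, produces precisely the formula asserted in Theorem \ref{mth2}.

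The computation is essentially mechanical once Lemma \ref{lemcB1} is in hand; the only step that genuinely requires care is the interchange of summation together with the identification $\sum_{a>j}\Gamma_{a,j}=\Gamma_j$, where I would check that every chain $j=i_{s+1}<i_s<\cdots<i_0$ is accounted for exactly once. A minor secondary point is to note that the denominator $1-\frac{xy}{1-x}-\sum_{j\geq1}x^jy\,\Gamma_j$ is a formal power series with constant term $1$, hence invertible, so that solving for $J$ is legitimate in the ring of formal power series.
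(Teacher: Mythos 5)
Your proposal is correct and follows exactly the route the paper intends: the paper derives Theorem \ref{mth2} by combining \eqref{eqcB1} with Lemma \ref{lemcB1} (just as Theorem \ref{mth1} follows from \eqref{eqcA1} and Lemma \ref{lemcA1}), and your interchange of summation with the identification $\sum_{a>j}\Gamma_{a,j}=\Gamma_j$, followed by substituting $\theta_j=x^jy(J-1)+(1-x^jy)\gamma_j$ and solving the resulting linear equation, is precisely that computation carried out in full. The checks you flag (each chain counted once, and invertibility of the denominator as a formal power series) are the right ones and both hold.
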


For instance, if $p_{a,b}=1$ for all $a,b\geq 1$, then $\mu_{a,b}=0$ which implies that $\Gamma_a=0$. Thus Theorem~\ref{mth2} shows that $J(x,y,1,1,\cdots)=\frac{1-x}{1-x-xy}$.

\vspace{1cc}
\begin{center}
{\bf 3.1. Counting all corners of type B}
\end{center}

Let $p_{a,b}=p$ for all $a,b\geq 1$. From the definitions, we have $\mu_{a,b}=(p-1)x^ay$ which yields
\begin{align}\label{eqcB4}
\Gamma_j&=\sum_{s\geq 0}\left((p-1)^{s+1}\sum_{j=i_{s+1}<i_s<\cdots<i_0}x^{i_0+\cdots+i_s}\right)\notag\\
&=\sum_{s\geq 0}\frac{(p-1)^{s+1}y^{s+1}x^{(s+1)j+\binom{s+2}{2}}}{(1-x)(1-x^2)\cdots(1-x^{s+1})}.
\end{align}
From Theorem~\ref{mth2} and \eqref{eqcB4}, the generating function $F=J(x,y,p,p,\cdots)$ is given by
\begin{align*}
F&=1+\frac{p\frac{xy}{1-x}+p\sum_{j\geq 1}\Gamma_jx^jy}{1-\frac{xy}{1-x}-\sum_{j\geq 1}\Gamma_jx^jy}
=1+\frac{p\frac{xy}{1-x}+p\sum_{s\geq 0}\frac{(p-1)^{s+1}y^{s+2}x^{\binom{s+3}{2}}}{(1-x)(1-x^2)\cdots(1-x^{s+2})}}{1-\frac{xy}{1-x}-\sum_{s\geq 0}\frac{(p-1)^{s+1}y^{s+2}x^{\binom{s+3}{2}}}{(1-x)(1-x^2)\cdots(1-x^{s+2})}}.
\end{align*}
Let $\corb(\pi)$ be the number of corners of type B in $\pi$.
Define $h_{n,k}=\sum_{\pi \in \BB_{n,k}}\corb(\pi)$ and $h_n=\sum_{k\geq 1}h_{n,k}$.
Let $H(x,y)=\sum_{n,k\geq 1}h_{n,k}x^ny^k$ be the generating function for the total number of type B corners over all bargraphs according to the number of cells and columns.  Then, it follows that
\begin{equation}
H(x,y)=\frac{\partial F}{\partial
p}\Bigr|_{\substack{p=1}}=\frac{xy(1-x-xy+x^2y^2)}{(1-x-xy)^2}.
\end{equation}
Note that $H(x,1)=\frac{x(x-1)^2}{(1-2x)^2}$ is the generating function for the total number of type B corners over all bargraphs according to the number of cells.

\vspace{.5cc}
\begin{center}
{\bf 3.2. Counting $(v,w)$-corners of type B}
\end{center}

Fix $v,w\geq1$. Define $p_{v,w}=p$ and $p_{a,b}=1$ for all $(a,b)\neq(v,w)$. Then we have
$\mu_{a,b}=(1-x^ay)x^{av}y^v(p-1)\delta_{a-b=w}$ which yields
\begin{align}\label{eqcB5}
\Gamma_j&=\sum_{s\geq 0}\sum_{j=i_{s+1}<i_s<\cdots<i_0}\prod_{\ell=0}^s\left((1-x^{i_{\ell}}y)x^{i_{\ell}v}y^v(p-1)\delta_{i_{\ell+1}-i_{\ell}=w}\right)\notag\\
&=\sum_{s\geq 0} (p-1)^{s+1}y^{v(s+1)}x^{vj(s+1)+vw\binom{s+1}{2}}\prod_{\ell=0}^s(1-x^{j+(\ell+1) w}y).
\end{align}
From Theorem~\ref{mth2} and \eqref{eqcB5}, the generating function $F=J(x,y,\pp)$ is given by
\begin{align*}
F=1+\frac{\frac{yx}{1-x}+(1+\Gamma_w)(1-x^wy)x^{wv}y^v(p-1)+y\displaystyle\sum_{j\geq 1}x^j\Gamma_j}{1-\frac{xy}{1-x}-y\displaystyle\sum_{j\geq 1}x^j\Gamma_j}.
\end{align*}

Recall that $\Lambda_{(v,w)}(\pi)$ denotes the number of $(v,w)$-corners of type B in $\pi$.
We define $t_{n,k}=\sum_{\pi \in \BB_{n,k}}\Lambda_{(v,w)}(\pi)$ and $t_n=\sum_{k\geq 1}t_{n,k}$.
Let $T(x,y)=\sum_{n,k\geq 1}t_{n,k}x^ny^k$ be the generating function for the total number of $(v,w)$-corners of type B over all bargraphs according to the number of cells and columns.  Then, it follows that
\begin{align*}
T(x,y)&=\frac{\partial F}{\partial
p}\Bigr|_{\substack{p=1}}\\
&=\frac{(1-x^wy)x^{vw}y^w}{\left(1-\frac{xy}{1-x}\right)^2}+\frac{y^{v+1}x^{2v+3}(1-x^wy)+(yx)^{v+1}(1-x^{w+1}y)}{(1-x^{v+1})(1-x^{v+2})\left(1-\frac{xy}{1-x}\right)^2},
\end{align*}
which leads to
$$T(x,1)=\frac{(1-x^w)x^{vw}}{\left(1-\frac{x}{1-x}\right)^2}+\frac{x^{2v+3}(1-x^w)+x^{v+1}(1-x^{w+1})}{(1-x^{v+1})(1-x^{v+2})\left(1-\frac{x}{1-x}\right)^2};$$
this latter is the generating function for the total number of $(v,w)$-corners of
type B over all bargraphs according to the number of cells.

\vspace{1cc}
\begin{center}
{\bf 3.3. Restricted bargraphs}
\end{center}

Theorem \ref{mth2} can be refined as follows. For $N\geq1$, let
$J^{(N)}:=J^{(N)}(x,y,\pp)$ be the generating function for the
number of bargraphs $\pi$ such that the height of each column is
at most $N$ according to the number of cells in $\pi$, the number
of columns of $\pi$, and the number of $(a,b)$-corners of type B
in $\pi$ corresponding to the variables $x,y$ and
$\pp=(p_{a,b})_{a,b\geq 1}$ respectively. Then by using similar
arguments as in the proof of \eqref{eqcB1} and \eqref{eqcB3}, we
obtain that $$J^{(N)}=1+\sum_{a=1}^NJ_a^{(N)}\mbox{ and }
J_a^{(N)}=\theta_a+\sum_{b=1}^{a-1}\mu_{a,b}J_b^{(N)},$$ for all
$a=1,2,\ldots,N$, where $J^{(N)}_a:=J^{(N)}_a(x,y,\pp)$ is the
generating function for the number of bargraphs $\pi=a\pi'$ such
that the height of each column is at most $N$. From the proof of
Theorem \ref{mth2}, we can state its extension as follows.
\begin{theorem}\label{mthN2}
The generating function $J^{(N)}=J^{(N)}(x,y,\pp)$ is given by
\begin{equation*}
J^{(N)}=1+\frac{\sum_{j=1}^N(1+\Gamma_j)(1-x^jy)\gamma_j}{1-y\sum_{j=1}^Nx^j-\sum_{j=1}^Nx^jy
\Gamma_j},
\end{equation*}
where $$\Gamma_j=\sum_{s\geq 0}\sum_{j=i_{s+1}<i_s<\cdots<i_0\leq N}\prod_{\ell=0}^s\mu_{i_{\ell}i_{\ell+1}}.$$ Moreover, for all $a=1,2,\ldots,N$, we have
\begin{align*}
J_a^{(N)}=\left(x^ay+\sum_{j=1}^{a-1}x^jy\Gamma_{a,j}\right)(J^{(N)}-1)
+(1-x^ay)\gamma_a+\sum_{j=1}^{a-1}\Gamma_{a,j}(1-x^jy)\gamma_j,
\end{align*}
where
$\Gamma_{N,j}=\sum_{s\geq0}\sum_{j=i_{s+1}<i_s<\cdots<i_0=N}\prod_{\ell=0}^s\mu_{i_\ell
i_{\ell+1}}$.
\end{theorem}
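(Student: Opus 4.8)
The plan is to mirror the derivation of Lemma~\ref{lemcB1} and Theorem~\ref{mth2}, the only change being that every sum over column heights is truncated at $N$. I would take as the starting point the two recurrences recorded just before the statement, $J^{(N)}=1+\sum_{a=1}^{N}J_a^{(N)}$ and $J_a^{(N)}=\theta_a+\sum_{b=1}^{a-1}\mu_{a,b}J_b^{(N)}$ for $1\le a\le N$, where here $\theta_a$ is understood with $J^{(N)}$ in place of $J$, that is, $\theta_a=x^ay(J^{(N)}-1)+(1-x^ay)\gamma_a$.

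First I would prove the restricted analogue of Lemma~\ref{lemcB1}: for every $a=1,\ldots,N$,
$$J_a^{(N)}=\theta_a+\sum_{j=1}^{a-1}\Gamma_{a,j}\theta_j,\qquad \Gamma_{a,j}=\sum_{s\ge0}\ \sum_{j=i_{s+1}<i_s<\cdots<i_0=a}\ \prod_{\ell=0}^{s}\mu_{i_\ell i_{\ell+1}}.$$
This is an induction on $a$: the base case $a=1$ is immediate, and the inductive step consists of plugging the hypothesis for $J_b^{(N)}$, $b<a$, into $J_a^{(N)}=\theta_a+\sum_{b=1}^{a-1}\mu_{a,b}J_b^{(N)}$ and collecting the coefficient of each $\theta_j$, exactly as in the proof of Lemma~\ref{lemcB1}; the cap $a\le N$ plays no role here because that recursion only calls on $J_b^{(N)}$ with $b<a$.

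To obtain the ``moreover'' formula I would substitute $\theta_j=x^jy(J^{(N)}-1)+(1-x^jy)\gamma_j$ into the identity just proved and separate the terms proportional to $J^{(N)}-1$; setting $a=N$ recovers the stated expression involving $\Gamma_{N,j}$. For the closed form of $J^{(N)}$ I would sum that identity over $a=1,\ldots,N$, exchange the order of summation, and use $\sum_{a=j+1}^{N}\Gamma_{a,j}=\Gamma_j$ (the restricted $\Gamma_j=\sum_{s\ge0}\sum_{j=i_{s+1}<\cdots<i_0\le N}\prod_{\ell=0}^{s}\mu_{i_\ell i_{\ell+1}}$ of the statement, with $\Gamma_N=0$) to get $J^{(N)}-1=\sum_{j=1}^{N}(1+\Gamma_j)\theta_j$; substituting $\theta_j$ once more, moving the $(J^{(N)}-1)$-terms to the left, and writing $\sum_{j=1}^{N}(1+\Gamma_j)x^jy=y\sum_{j=1}^{N}x^j+\sum_{j=1}^{N}x^jy\,\Gamma_j$ yields $\bigl(1-y\sum_{j=1}^{N}x^j-\sum_{j=1}^{N}x^jy\,\Gamma_j\bigr)(J^{(N)}-1)=\sum_{j=1}^{N}(1+\Gamma_j)(1-x^jy)\gamma_j$, and dividing gives the first displayed formula.

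The whole argument is routine generating-function bookkeeping, and the one place I would be careful is the identity $\Gamma_j=\sum_{a=j+1}^{N}\Gamma_{a,j}$ used in the last step: one must keep the truncation ``$\le N$'' applied simultaneously and consistently to the chain-index ranges and to the outer summation, and check the degenerate contributions ($s=0$, $j=a-1$, $j=N$). No idea beyond the unrestricted proof is needed.
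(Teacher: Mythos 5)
Your proposal is correct and follows exactly the route the paper intends: the paper's own proof consists of asserting the truncated recurrences $J^{(N)}=1+\sum_{a=1}^{N}J_a^{(N)}$ and $J_a^{(N)}=\theta_a+\sum_{b=1}^{a-1}\mu_{a,b}J_b^{(N)}$ and then invoking ``the proof of Theorem~\ref{mth2}'', which is precisely the restricted analogue of Lemma~\ref{lemcB1} followed by the substitution of $\theta_j$ and the summation over $a$ using $\sum_{a=j+1}^{N}\Gamma_{a,j}=\Gamma_j$ that you carry out. Your write-up simply makes explicit the steps the paper leaves implicit, including the correct observation that the cap $a\le N$ does not interfere with the downward recursion.
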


For instance, Theorem \ref{mthN2} for $N=1$ gives
$$J^{(1)}(x,y,\pp)=1+\frac{(1-xy)\gamma_1}{1-xy}=1+\sum_{m\geq1}x^my^mp_{m,1}.$$
\vspace{.5cc}
\begin{center}
{\bf 3.4. Counting corners of type B in set partitions}
\end{center}

Recall that we represent any set
partition as a bargraph corresponding to its canonical sequential representation. Let $P_k(x,y,\pp)$ be the generating function for the number of
set partitions $\pi$ of $[n]$ with exactly $k$ blocks according to
the number of cells in $\pi$, the number of columns of $\pi$
(which is $n$), and the number of $(a,b)$-corners of type B in
$\pi$ corresponding to the variables $x,y$ and
$\pp=(p_{a,b})_{a,b\geq 1}$ respectively.

Note that each set partition with exactly $k$ blocks can be
decomposed as $1\pi^{(1)}\cdots k\pi^{(k)}$ such that $\pi^{(j)}$
is a word over alphabet $[j]$. Thus, by Theorem \ref{mthN2}, we
have the following result.
\begin{theorem}\label{mthPk1}
Let $p_{a,b}=p$ for all $a,b\geq 1$. Then the generating function $P_k(x,y,\pp)$ is given by
\begin{align*}
P_k(x,y,\pp)=p^{1-k}\prod_{N=1}^kJ^{(N)}_N(x,y,\pp),
\end{align*}
where $J_N^{(N)}$ is given in statement Theorem \ref{mthN2}.
\end{theorem}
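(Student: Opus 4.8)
The plan is to build on the canonical decomposition recalled just above the statement. Write the canonical sequential word of a set partition of $[n]$ with exactly $k$ blocks as $\pi=1\pi^{(1)}2\pi^{(2)}\cdots k\pi^{(k)}$, where for each $N=1,\dots,k$ the word $\pi^{(N)}$ is an \emph{arbitrary} word over the alphabet $[N]$, and the $k$ words are chosen independently. Regarded as a bargraph, the $N$-th piece $N\pi^{(N)}$ is exactly a bargraph whose first column has height $N$ and all of whose columns have height at most $N$; by definition these are the objects enumerated, with $x$ marking cells and $y$ marking columns, by $J^{(N)}_N(x,y,\pp)$ (Theorem~\ref{mthN2}, with $a=N$), and conversely concatenating any such $k$ pieces yields a legal $k$-block set partition. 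Since the number of cells and the number of columns of $\pi$ are additive over the $k$ pieces, the whole theorem reduces to understanding how $\corb(\pi)$ decomposes over the pieces.

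This corner bookkeeping is the step I expect to be the main obstacle, so I would carry it out carefully. A type B corner of a bargraph is a maximal horizontal run immediately followed by a down run, hence it sits either (i) at an interior column junction where the height strictly decreases, or (ii) at the terminal descent to the $x$-axis. In the concatenation $\pi$, the junction between pieces $N\pi^{(N)}$ and $(N+1)\pi^{(N+1)}$ joins a column of height at most $N$ to a column of height $N+1$; it is therefore a horizontal run followed by an \emph{up} run, never a type B corner, and in particular no type B corner of $\pi$ straddles a piece boundary. Consequently the type B corners of kind (i) lying inside a given piece of $\pi$ match, one for one, those of the corresponding standalone bargraph $N\pi^{(N)}$. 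The only mismatch is the terminal descent: for $1\le N\le k-1$ the standalone bargraph $N\pi^{(N)}$ carries one type B corner of kind (ii) that in $\pi$ is replaced by the (non-corner) ascent to height $N+1$, whereas for $N=k$ the terminal descent of the last piece \emph{is} the terminal descent of $\pi$. Hence
$$\corb(\pi)=\sum_{N=1}^{k}\corb\bigl(N\pi^{(N)}\bigr)-(k-1),$$
where on the right each $N\pi^{(N)}$ is regarded as a bargraph in its own right.

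It remains to translate this into generating functions, and this is exactly where the hypothesis $p_{a,b}=p$ for all $a,b$ enters: under it every type B corner, whatever its $(a,b)$-shape, contributes the single factor $p$, so the identity above exponentiates to $p^{\corb(\pi)}=p^{-(k-1)}\prod_{N=1}^{k}p^{\corb(N\pi^{(N)})}$. Combining this with the fact that the weights $x$ per cell and $y$ per column are multiplicative over the pieces, and summing over all independent choices of $\pi^{(1)},\dots,\pi^{(k)}$, gives
$$P_k(x,y,\pp)=p^{1-k}\prod_{N=1}^{k}J^{(N)}_N(x,y,\pp),$$
and inserting the closed form of $J^{(N)}_N$ from Theorem~\ref{mthN2} completes the proof. (As a consistency check, the same analysis applied to type A corners produces no terminal-descent mismatch, which is why the corresponding statement for type A carries no extra power of $q$.)
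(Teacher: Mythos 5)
Your proposal is correct and follows the same route as the paper: the canonical decomposition $\pi=1\pi^{(1)}\cdots k\pi^{(k)}$ with each piece $N\pi^{(N)}$ enumerated by $J^{(N)}_N$. In fact you supply the one nontrivial detail that the paper's one-line proof leaves implicit, namely the corner bookkeeping showing that each of the first $k-1$ pieces loses exactly its terminal type-B corner upon concatenation (junctions being ascents), which is precisely the origin of the factor $p^{1-k}$ and the reason the hypothesis $p_{a,b}=p$ is needed.
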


Now, we consider counting all corners of type B in set partitions. Let $p_{a,b}=p$ for all $a,b\geq 1$, and
$Q_k(x,y)=\frac{\partial}{\partial p}P_k(x,y,\pp)\Bigr|_{\substack{p=1}}$. By Theorem \ref{mthN2}, we have that $J^{(N)}(x,y,{\bf 1})=\frac{1}{1-y\sum_{j=1}^Nx^j}$ and
$$\frac{\partial }{\partial p}J^{(N)}(x,y,\pp)\mid_{p=1}=
\frac{y\sum_{j=1}^Nx^j-\left(y\sum_{j=1}^Nx^j\right)^2+y^2\sum_{j=1}^Nx^j\frac{x^{j+1}-x^{N+1}}{1-x}}
{\left(1-y\sum_{j=1}^Nx^j\right)^2}.$$
Moreover, Theorem \ref{mthN2} gives that
$J_N^{(N)}(x,y,{\bf 1})=\frac{x^Ny}{1-y\sum_{j=1}^Nx^j}$, and
$$\frac{\partial }{\partial p}J_N^{(N)}(x,y,\pp)\mid_{p=1}=x^Ny\left(\frac{\partial }{\partial p}J^{(N)}(x,y,\pp)\mid_{p=1}
+\frac{1-x^Ny}{1-\sum_{j=1}^Nx^jy}\right).$$
Hence, by Theorem \ref{mthPk1}, we have
$$Q_k(x,y)=\prod_{N=1}^k\frac{x^Ny}{1-y\sum_{j=1}^Nx^j}\left(\sum_{N=1}^k\frac{\frac{\partial }{\partial p}J_N^{(N)}(x,y,\pp)\mid_{p=1}}{\frac{x^Ny}{1-y\sum_{j=1}^Nx^j}}-k+1\right).$$
In particular, the generating function for the total number of corners of type B over all set partitions of $[n]$ with $k$ blocks is given by
$$Q_k(1,t)=\frac{t^k}{(1-t)(1-2t)\cdots(1-kt)}
\left(\sum_{N=1}^k\frac{\frac{\partial}{\partial p}J_N^{(N)}(1,t,\pp)\mid_{p=1}}{\frac{t}{1-Nt}}-k+1\right),$$
which, by $\frac{\partial}{\partial p}J_N^{(N)}(1,t,\pp)\mid_{p=1}=t\left(\frac{Nt}{1-Nt}+\frac{t^2N(N-1)}{2(1-Nt)^2}+\frac{1-t}{1-Nt}\right)$, is equivalent to
\begin{align*}
&Q_k(1,t)\\
&=\frac{t^k}{(1-t)(1-2t)\cdots(1-kt)}
\left(1-k+\sum_{N=1}^k\left(1+(N-1)t+\frac{t^2N(N-1)}{2(1-Nt)}\right)\right).
\end{align*}
Hence,
$$Q_k(1,t)=\frac{t^k}{(1-t)(1-2t)\cdots(1-kt)}
\left(1+\frac{t}{2}\binom{k}{2}+\frac{t}{2}\sum_{N=1}^k\frac{N-1}{1-Nt}\right).$$
Define $\tilde{Q}_k(t)$ to be the corresponding exponential generating function to $Q_k(1,t)$, that is $\tilde{Q}_k(t)=\sum_{n\geq0}q_{n,k}\frac{t^n}{n!}$ where $q_{n,k}$ is the coefficient of $t^n$ in $Q_k(1,t)$. Similarly to Section 2.4, we have
\begin{align*}
\tilde{Q}_k(t)&=\frac{(e^t-1)^k}{k!}+\binom{k}{2}\int_0^t\frac{(e^r-1)^k}{2k!}dr
+\frac{kt(e^t-1)^{k-1}e^t}{2k!}-\frac{k(e^t-1)^k}{2k!}\\
&-\int_0^t\frac{rk(e^r-1)^{k-1}e^r}{2k!}dt.
\end{align*}
Define $\tilde{Q}(t,y)=\sum_{k\geq0}\tilde{Q}_k(t)y^k$; thus by multiplying by $y^k$ and summing over $k\geq1$, we obtain
\begin{align*}
\tilde{Q}(t,y)&=e^{y(e^t-1)}-1+\frac{y^2}{4}\int_0^t(e^r-1)^2e^{y(e^r-1)}dr\\
&+\frac{ty}{2}e^{t+y(e^t-1)}-\frac{y}{2}(e^t-1)e^{y(e^t-1)}-\frac{y}{2}\int_0^tre^{r+y(e^r-1)}dr.
\end{align*}
In particular,
$$\frac{\partial}{\partial t}\tilde{Q}(t,y)=
\frac{y}{4}\left(y(2t-1)e^{2t+y(e^t-1)}+ye^{y(e^t-1)}+4e^{t+y(e^t-1)}\right),$$
which is equivalent to
$$\frac{\partial }{\partial t}\tilde{Q}(t,y)=
\frac{2t-1}{4}\frac{\partial^2}{\partial t^2}e^{y(e^t-1)}-\frac{2t-5}{4}\frac{\partial}{\partial t}e^{y(e^t-1)}+\frac{y^2}{4}e^{y(e^t-1)}.$$
Since $e^{y(e^t-1)}=\sum_{n\geq0}\sum_{k=0}^nS_{n,k}\frac{x^ny^k}{n!}$, $S_{n,k}$ is the Stirling number of the second kind, we obtain the following result.
\begin{theorem}\label{thmcBsp}
The total number of corners of type B over set partitions of $[n+1]$ with $k$ blocks is given by
$$\frac{n}{2}S_{n+1,k}-\frac{1}{4}S_{n+2,k}-\frac{n}{2}S_{n,k}+\frac{5}{4}S_{n+1,k}+\frac{1}{4}S_{n,k-2}.$$
Moreover, the total number of corners of type B over set partitions of $[n+1]$ is given by
$$\frac{2n+5}{4}B_{n+1}-\frac{1}{4}B_{n+2}-\frac{2n-1}{4}B_n,$$
where $B_n$ is the $n^{th}$ Bell number.
\end{theorem}

\vspace{1cc}
{\bf Acknowledgment:} G. Y\i ld\i r\i m would like to thank the Department of Mathematics at the University of Haifa for their warm hospitality during the writing of this paper. The authors posted this version with minor corrections in the statements of Theorem 6 and 11 after the reviewer, David Callan's comment, in MathSciNet regarding Theorem 6.

\vspace{2cc}

\vspace{1cc}

\noindent Toufik Mansour, Department of Mathematics, University of Haifa, 3498838 Haifa, Israel, {\it tmansour@univ.haifa.ac.il}
\medskip

\noindent G\"{o}khan Y\i ld\i r\i m, Department of Mathematics, Bilkent University, 06800 Ankara, Turkey, {\it gokhan.yildirim@bilkent.edu.tr}

{\small
\noindent

}\end{document}